\DeclareMathOperator{\sech}{sech}
\DeclareMathOperator{\arcsinh}{arcsinh}
\begin{document}

%

\markboth{Y. J. Bagul, B. O. Fande}{Bounds for generalized circular and hyperbolic functions}

\title{Classical exponential bounds for $p$-generalized circular and hyperbolic functions}

\author{Yogesh J. Bagul $^{1}$\coraut, Bharti O. Fande$^{2,a}$}

\address{ $^{1}$Department of Mathematics\\ K. K. M. College, Manwath\\ Dist: Parbhani (M. S.) - 431505, India\\
$^{a}$ Research Scholar\\
$^{2}$Department of Mathematics\\ Government Vidarbha Institute of Science\\ and Humanities, Amravati (M. S.)-444604, India\\
	}
\emails{yjbagul@gmail.com, fandebharti@gmail.com}

\maketitle
\begin{abstract}
In this article, we obtain exponential bounds for the generalized circular and hyperbolic functions with one parameter $p$. Our results are natural generalizations of some existing results for classical circular and hyperbolic functions.
\end{abstract}
\subjclass{33B10, 33E30, 26D05.}  
\keywords{Generalized circular functions, generalized hyperbolic functions, eigenfunction,  exponential bounds, inequalities.}        


\vspace{10pt}

\section{Introduction}\label{sec1}
For $ 0 \leq x \leq 1, $ we know that
$$ \arcsin x = \int_0^x \frac{1}{(1-t^2)^{1/2}} \, dt $$
and $$ \frac{\pi}{2} = \arcsin 1 = \int_0^1 \frac{1}{(1-t^2)^{1/2}} \, dt. $$
Hence it is easy to define the function sine on $ [0, \pi/2] $ as the inverse of arcsine and it can be then extended on $ (-\infty, \infty).$
 On the other hand, an eigenfunction in the Dirichlet problem \cite{drabek} for the one-dimensional $p$-Laplacian occurs as the inverse of the function $ \phi_p: [0, 1] \rightarrow \mathbb{R} $ given by 
 $$ \phi_p(x) = \int_0^x \frac{1}{(1-t^p)^{1/p}} \, dt. $$
 This eigenfunction is associated with the eigenvalue containing $\pi_p = \frac{2\pi}{p \sin(\pi/p)}.$ Therefore, we can denote $ \phi_p(x) $ by $ \arcsin_p(x), $ i.e., 
 $$ \arcsin_p(x) = \int_0^x \frac{1}{(1-t^p)^{1/p}} \, dt $$
 and we evaluate 
 $$ \arcsin_p(1) = \frac{\pi}{p \sin(\pi/p)} = \frac{\pi_p}{2}. $$
 Thus, the inverse of $ \arcsin_p $ on $ [0, \pi_p/2] $ is a natural generalization of the sine function and it is denoted by $ \sin_p.$ For $ 1 < p < \infty, $ we call this function as the $p$-generalized sine function. The $p$-generalized sine function is clearly strictly increasing on $ [0, \pi_p/2]$ and $ \sin_p 0 = 0,$ $ \sin_p(\pi_p/2) = 1.$ Further, we can extend this function on $ (-\infty, \infty) $ by $2\pi_p$ periodicity. For instance, $ \sin_p $ is extended on $ [-\pi_p, \pi_p] $ by oddness and by defining $$ \sin_p(x) = \sin_p(\pi_p - x); \, \, x \in  [\pi_p/2, \pi_p]. $$
In the same way, other generalized circular and hyperbolic functions can be defined. These generalized functions are quite similar to the classical functions in many aspects and they have been studied extensively. For more details, we refer the reader to \cite{bushell, lang, yin, baricz, lindqvist, lindqvist1, lindqvist2, edmunds} and the references thereof. In this context, many researchers have obtained the inequalities involving generalized circular and hyperbolic functions. See, for example \cite{bhayo, bhayo1, huang, huang1, klen, neuman, wang, yin1} and the references therein. In this article, we establish exponential bounds for the generalized circular and hyperbolic functions. For the corresponding classical functions, this type of bounds can be seen in \cite{bagul, bagul1, bagul2, bagul3, chesneau, malesevic}. 

\section{Preliminaries and Lemmas}
In this section, we see the definitions of other generalized circular and hyperbolic functions, and lemmas for proving our main results. \\

Let $ 1 < p < \infty.$ Then the generalized cosine function $ \cos_p $ is defined by
$$ \cos_p x = \diff{\sin_p x}{x}, \, \, x \in [0, \pi_p/2]. $$
Hence, $$ \cos_p x = (1-\sin_p^p x)^{1/p}, \, \, x \in [0, \pi_p/2]. $$
From this we get $ \cos_p 0 = 1$ and $ \cos_p (\pi_p/2) = 0, $ and 
$$ \diff{\cos_p x}{x} = - \cos_p^{2-p} x \sin_p^{p-1} x, \, \, x \in [0, \pi_p/2].$$
It is to be noted that the generalized cosine function $ \cos_p $ is decreasing on $ [0, \pi_p/2].$ The generalized tangent function is defined by
$$ \tan_p x = \frac{\sin_p x}{\cos_p x}, \, \, x \in \mathbb{R} - \left\lbrace k \pi_p + \frac{\pi_p}{2}: k \in \mathbb{Z}\right\rbrace. $$
This implies 
$$ \diff{\tan_p x}{x} = 1 + \vert \tan_p x \vert^p, \, \, x \in (-\pi_p/2, \pi_p/2) $$
Or
$$ \diff{\tan_p x}{x} = 1 +  \tan_p^p x , \, \, x \in [0, \pi_p/2). $$
The generalized secant function is denoted by $ \sec_p $ and is defined as
$$ \sec_p x = \frac{1}{\cos_p x}, \, \, x \in [0, \pi_p/2). $$
Then it is easy to obtain 
$$ \sec_p^p x = 1 + \tan_p^p x, \, \, x \in (0, \pi_p/2) $$
and 
$$ \diff{\sec_p x}{x} = \sec_p x \tan_p^{p-1} x, \, \, x \in [0, \pi_p/2). $$

The generalized hyperbolic sine function $ \sinh_p $ is the inverse of the generalized inverse hyperbolic sine function $ \arcsinh_p  $ which is defined by 
$$ \arcsinh_p x = \left\{
           \begin{array}{cc}
             \int_0^x \frac{1}{(1+t^p)^{1/p}} dt, & \quad x \in [0, \infty), \\
             -\arcsinh_p (-x), & \quad x \in (-\infty, 0).
            \end{array}
            \right. $$
 The generalized hyperbolic cosine, tangent, and secant functions are denoted by $ \cosh_p$, $ \tanh_p, $ and $ \sech_p $ respectively. They are defined as
 $$ \cosh_p x = \diff{\sinh_p x}{x}, \, \, \tanh_p x = \frac{\sinh_p x}{\cosh_p x}, \, \, \sech_p x = \frac{1}{\cosh_p x}. $$  
 It is worth mentioning that all the above-generalized functions become classical ones for $ p = 2. $ 
 Furthermore, it is not difficult to obtain the following formulae:
 $$ \cosh_p^p x - \sinh_p^p = 1, \, \, x > 0, $$  
 $$ \diff{\cosh_p x}{x} = \cosh_p^{2-p} \sinh_p^{p-1} x, \, \, x \geq 0, $$   
 $$ \diff{\tanh_p x}{x} = 1 - \tanh_p^p x = \sech_p^p x, \, \, x \geq 0, $$ 
 and
 $$ \diff{\sech_p x}{x} = - \sech_p x \tanh_p^{p-1} x. $$  The proofs of our generalizations require the l'H\^{o}pital's rule of monotonicity \cite{anderson}. It is the key tool and is stated as
\begin{lemma}(\cite{anderson})\label{lemm2.1} 
Let $ f_1(x)$ and $f_2(x)$ be two real valued-functions which are continuous on $ [a, b] $ and derivable on $ (a, b) $, where $ -\infty < a < b < \infty $ and $ f
_2^{\prime}(x) \neq 0, $ for all $ x \in (a, b). $
Let, $$ A(x) = \frac{f_1(x) - f_1(a)}{f_2(x) - f_2(a)},\ x\in (a,b) $$
and $$ B(x) = \frac{f_1(x) - f_1(b)}{f_2(x) - f_2(b)},\ x\in (a,b). $$ Then, we have
\begin{itemize}
\item[(i)] $ A(x) $ and $ B(x) $ are increasing on $ (a, b) $ if $ f_1^{\prime}(x)/f_2^{\prime}(x) $ is increasing on $ (a, b) .$
\item[(ii)] $ A(x) $ and $ B(x) $ are decreasing on $ (a, b) $ if $ f_1^{\prime}(x)/f_2^{\prime}(x) $ is decreasing on $ (a, b). $
\end{itemize}
The strictness of the monotonicity of $ A(x) $ and $ B(x) $ depends on the strictness of the monotonicity of $ f_1^{\prime}(x)/f_2^{\prime}(x)$.  
\end{lemma}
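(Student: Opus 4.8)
The plan is to establish the monotonicity of $A$ and $B$ directly, by determining the sign of each derivative and using Cauchy's Mean Value Theorem to transfer the monotonicity of the ratio $f_1'/f_2'$ to the difference quotients. First I would note that since $f_2'$ is continuous and never vanishes on $(a,b)$, Darboux's theorem forces $f_2'$ to keep a constant sign; without loss of generality take $f_2' > 0$, the opposite case being symmetric. Consequently $f_2$ is strictly increasing, so $f_2(x) - f_2(a) > 0$ and $f_2(x) - f_2(b) < 0$ for every $x \in (a,b)$, which makes both $A$ and $B$ well defined and differentiable.

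For $A(x)$ I would compute
\begin{equation*}
A'(x) = \frac{f_1'(x)\,[f_2(x) - f_2(a)] - f_2'(x)\,[f_1(x) - f_1(a)]}{[f_2(x) - f_2(a)]^2},
\end{equation*}
so the sign of $A'$ is governed by its numerator $N(x)$. The key step is to rewrite $N(x)$ using Cauchy's Mean Value Theorem: applied on $[a,x]$ it furnishes $\xi \in (a,x)$ with $\frac{f_1(x)-f_1(a)}{f_2(x)-f_2(a)} = \frac{f_1'(\xi)}{f_2'(\xi)}$. Substituting and factoring gives
\begin{equation*}
N(x) = f_2'(x)\,[f_2(x) - f_2(a)]\left( \frac{f_1'(x)}{f_2'(x)} - \frac{f_1'(\xi)}{f_2'(\xi)} \right).
\end{equation*}
Since $f_2'(x) > 0$ and $f_2(x) - f_2(a) > 0$, the sign of $N(x)$ equals that of the bracket, and because $\xi < x$ the monotonicity of $f_1'/f_2'$ transfers directly: if $f_1'/f_2'$ is increasing the bracket is nonnegative and $A$ is increasing, while if $f_1'/f_2'$ is decreasing the bracket is nonpositive and $A$ is decreasing.

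For $B(x)$ I would carry out the identical computation but apply Cauchy's Mean Value Theorem on $[x,b]$, producing $\eta \in (x,b)$ with $\frac{f_1(x)-f_1(b)}{f_2(x)-f_2(b)} = \frac{f_1'(\eta)}{f_2'(\eta)}$; here the factor $f_2(x) - f_2(b)$ is negative and $\eta > x$, so the two sign reversals cancel and the conclusion for $B$ again matches the monotonicity of $f_1'/f_2'$. The strictness claim follows since each inequality above becomes strict exactly when the monotonicity of $f_1'/f_2'$ is strict. I expect the main obstacle to be bookkeeping rather than conceptual: one must track the signs of $f_2(x) - f_2(a)$ and $f_2(x) - f_2(b)$ and verify that the location of the intermediate point (namely $\xi < x$ versus $\eta > x$) combines with those signs to yield a consistent verdict across all four cases.
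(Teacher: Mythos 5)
The paper offers no proof of this lemma: it is quoted with the citation \cite{anderson} (the l'H\^{o}pital monotone rule of Anderson--Vamanamurthy--Vuorinen), so there is no in-paper argument to compare against. Your proof is correct, and it is essentially the standard argument for this result: quotient rule, then Cauchy's mean value theorem to replace the difference quotient $\frac{f_1(x)-f_1(a)}{f_2(x)-f_2(a)}$ by $f_1^{\prime}(\xi)/f_2^{\prime}(\xi)$ at an intermediate point, then monotonicity of $f_1^{\prime}/f_2^{\prime}$ to fix the sign of the numerator of $A^{\prime}$. The sign bookkeeping for $B$ (a negative denominator factor $f_2(x)-f_2(b)$ playing against an intermediate point $\eta$ lying to the \emph{right} of $x$) is handled correctly, and the strictness claim does follow as you say, because $\xi<x$ and $\eta>x$ are strict inequalities, making $A^{\prime}$ and $B^{\prime}$ strictly positive (resp.\ negative) pointwise.

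One wording slip is worth correcting: you justify the constant sign of $f_2^{\prime}$ by saying it is ``continuous and never vanishes.'' Continuity of $f_2^{\prime}$ is not among the hypotheses --- only that $f_2$ is derivable with nonvanishing derivative on $(a,b)$. Fortunately, the theorem you invoke, Darboux's theorem, is precisely the tool that makes continuity unnecessary: every derivative has the intermediate value property, so $f_2^{\prime}\neq 0$ on the interval already forces a constant sign. Drop the word ``continuous'' and the step is airtight.
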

Also, we prove the following auxiliary results.

\begin{lemma}\label{lemm2.2}
For $ p > 1, $ the function $ \xi(x) = \frac{x}{\tan_p x} $  is strictly decreasing in $ (0, \pi_p/2).$
\end{lemma}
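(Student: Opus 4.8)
The plan is to apply the l'H\^{o}pital rule of monotonicity (Lemma~\ref{lemm2.1}) on $(0, \pi_p/2)$ with left endpoint $a = 0$. Putting $\xi(x) = x/\tan_p x$ into the quotient form required by the lemma is immediate: set $f_1(x) = x$ and $f_2(x) = \tan_p x$. Since $\tan_p 0 = 0$ (because $\sin_p 0 = 0$ and $\cos_p 0 = 1$) and $f_1(0) = 0$, we have
$$ \xi(x) = \frac{x}{\tan_p x} = \frac{f_1(x) - f_1(0)}{f_2(x) - f_2(0)} = A(x), $$
so Lemma~\ref{lemm2.1} will apply once we check that $f_2'(x) = \frac{d}{dx}\tan_p x \neq 0$ on $(0, \pi_p/2)$.

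Next I would compute the auxiliary ratio. Using the formula $\frac{d}{dx}\tan_p x = 1 + \tan_p^p x$ recorded above, we get $f_2'(x) = 1 + \tan_p^p x > 0$, so the nonvanishing hypothesis holds, and
$$ \frac{f_1'(x)}{f_2'(x)} = \frac{1}{1 + \tan_p^p x}. $$
The remaining step is to show this ratio is strictly decreasing on $(0, \pi_p/2)$. Since $\sin_p$ is strictly increasing and positive while $\cos_p$ is strictly decreasing and positive on this interval, $\tan_p x = \sin_p x/\cos_p x$ is strictly increasing and positive; hence $\tan_p^p x$ (with $p > 1$) is strictly increasing, $1 + \tan_p^p x$ is strictly increasing, and its reciprocal is strictly decreasing. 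By part (ii) of Lemma~\ref{lemm2.1}, $A(x) = \xi(x)$ is strictly decreasing on $(0, \pi_p/2)$, which is the claim.

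There is essentially no hard obstacle here; the only point needing care is the verification that $\tan_p$ is positive and increasing on $(0, \pi_p/2)$, so that $x \mapsto 1 + \tan_p^p x$ is increasing, together with the check $f_2' \neq 0$ that legitimizes the lemma. As a cross-check, one can bypass the lemma and argue directly: the sign of $\xi'(x)$ equals the sign of $N(x) := \tan_p x - x(1 + \tan_p^p x)$, and differentiating gives $N'(x) = -p\,x\,\tan_p^{p-1} x\,(1 + \tan_p^p x) < 0$ for $x \in (0, \pi_p/2)$; since $N(0) = 0$, this forces $N(x) < 0$ and hence $\xi'(x) < 0$. Either route yields strict monotonicity, but I would present the l'H\^{o}pital-rule argument, since it is the tool the paper emphasizes and it avoids the quotient-rule bookkeeping.
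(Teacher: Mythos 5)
Your proposal is correct, but your main argument is genuinely different from the paper's. The paper never invokes Lemma~\ref{lemm2.1} for this lemma: it differentiates $\xi$ by the quotient rule, sets $(\tan_p^2 x)\,\xi'(x) = \tan_p x - x - x\tan_p^p x =: \gamma(x)$, shows $\gamma'(x) < 0$, and concludes (implicitly using $\gamma(0+) = 0$) that $\gamma < 0$, hence $\xi' < 0$ on $(0, \pi_p/2)$. That is exactly your ``cross-check'' argument, and in fact your version of it is more careful: you obtain the full derivative $N'(x) = -p\,x\,\tan_p^{p-1} x\,(1+\tan_p^p x)$, whereas the paper's displayed computation drops the positive factor $1+\tan_p^p x$ (harmless, since only the sign matters), and you state the initial value $N(0)=0$ explicitly, which the paper leaves unsaid. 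Your primary route---taking $f_1(x)=x$, $f_2(x)=\tan_p x$ in Lemma~\ref{lemm2.1} and observing that $f_1'/f_2' = 1/(1+\tan_p^p x)$ is strictly decreasing because $\tan_p$ is positive and strictly increasing---is shorter, avoids the quotient-rule bookkeeping, and matches the style the paper uses in its main theorems. The only point to tighten: Lemma~\ref{lemm2.1} as stated requires $f_1, f_2$ continuous on the closed interval $[a,b]$, and $\tan_p$ is unbounded at $\pi_p/2$, so you should formally apply the lemma on $[0,b]$ for an arbitrary $b < \pi_p/2$; since strict decrease on every subinterval $(0,b)$ yields strict decrease on all of $(0, \pi_p/2)$, this is a one-line patch rather than a gap.
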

\begin{proof}
After differentiation, we get
$$ (\tan_p^2 x ) \xi^{\prime}(x) = \tan_p x - x - x \tan_p^p x := \gamma(x) $$
and 
\begin{align*}
\gamma^{\prime}(x) &= 1 + \tan_p^p x -1 -\tan_p^p x -px \tan_p^{p-1} x \\
&= -p x \tan_p^{p-1} x < 0
\end{align*}
for $ p > 1$ and $x \in (0, \pi_p/2). $ This implies that $ \gamma(x)$ is strictly decreasing in  $ (0, \pi_p/2) $ and so $ \xi(x)$ is strictly decreasing in  $(0, \pi_p/2). $
\end{proof}

\begin{lemma}\label{lemm2.3}
If $ p > 1, $ then the function $ \varsigma(x) = \frac{x}{\tanh_p x} $  is strictly increasing for $ x \geq 0.$
\end{lemma}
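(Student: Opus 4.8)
The plan is to imitate the argument used for Lemma~\ref{lemm2.2}, replacing each circular identity by its hyperbolic counterpart and keeping careful track of signs. First I would differentiate $\varsigma$ and clear the positive factor $\tanh_p^2 x$. Using $\frac{d}{dx}\tanh_p x = 1 - \tanh_p^p x$, this gives
$$ (\tanh_p^2 x)\,\varsigma^{\prime}(x) = \tanh_p x - x\,(1 - \tanh_p^p x) = \tanh_p x - x + x\tanh_p^p x := \delta(x). $$
Since $\tanh_p^2 x > 0$ for $x \in (0, \infty)$, the sign of $\varsigma^{\prime}(x)$ coincides with that of $\delta(x)$, so it suffices to show $\delta(x) > 0$ for $x > 0$.

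Next I would differentiate $\delta$. Using again $\frac{d}{dx}\tanh_p x = 1 - \tanh_p^p x$ together with $\frac{d}{dx}\tanh_p^p x = p\tanh_p^{p-1} x\,(1 - \tanh_p^p x)$, the constant term and the $\tanh_p^p x$ term cancel, and I expect to be left with
$$ \delta^{\prime}(x) = p x\,\tanh_p^{p-1} x\,(1 - \tanh_p^p x). $$
This is exactly where the hyperbolic case diverges from Lemma~\ref{lemm2.2}: in the circular setting the analogous derivative $\gamma^{\prime}$ carried an overall minus sign (coming from $\frac{d}{dx}\tan_p x = 1 + \tan_p^p x$), whereas here every factor is nonnegative. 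Indeed, for $x > 0$ we have $\tanh_p x > 0$, and the identity $1 - \tanh_p^p x = \sech_p^p x > 0$ guarantees $0 < \tanh_p x < 1$; hence $\delta^{\prime}(x) > 0$ for all $x > 0$ and $p > 1$. Therefore $\delta$ is strictly increasing on $(0, \infty)$, and since $\delta(0) = \tanh_p 0 = 0$, we obtain $\delta(x) > 0$ for every $x > 0$. This yields $\varsigma^{\prime}(x) > 0$ on $(0, \infty)$, so $\varsigma$ is strictly increasing there; the endpoint is covered by setting $\varsigma(0) = \lim_{x \to 0^{+}} x/\tanh_p x = 1$ by continuity, which preserves strict monotonicity on $[0, \infty)$.

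The only genuine subtlety, and the step deserving care, is the sign of $\delta^{\prime}$. One must record that $\frac{d}{dx}\tanh_p x = 1 - \tanh_p^p x$ rather than $1 + \tan_p^p x$, and that this quantity, being $\sech_p^p x$, lies strictly between $0$ and $1$ for $x > 0$. This single sign change is precisely what converts the decreasing conclusion of Lemma~\ref{lemm2.2} into the increasing conclusion here; the remaining manipulations are routine cancellations.
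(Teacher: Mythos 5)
Your proposal is correct and follows the paper's own argument: the paper likewise reduces to $\delta(x) = \tanh_p x - x\sech_p^p x$ (your $\tanh_p x - x + x\tanh_p^p x$ is the same function, via $\sech_p^p x = 1 - \tanh_p^p x$) and shows $\delta'(x) = p\,x\,\sech_p^p x\,\tanh_p^{p-1} x > 0$, which matches your expression $p\,x\,\tanh_p^{p-1} x\,(1-\tanh_p^p x)$. Your version is in fact slightly more complete, since you make explicit the step $\delta(0)=0 \Rightarrow \delta>0$ on $(0,\infty)$ that the paper only cites by analogy with Lemma~\ref{lemm2.2}.
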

\begin{proof}
Since$$ (\tanh_p^2 x) \varsigma^{\prime}(x)  = \tanh_p x - x \sech_p^p x := \delta(x)$$
and
$$ \delta^{\prime}(x) = p x \sech_p^{p-1} x \sech_p x \tanh_p^{p-1} x > 0$$
as $ p > 1 $ and $ x \geq 0.$ The statement follows by making the same argument as in the proof of Lemma \ref{lemm2.2}.
\end{proof}

\section{Main results}\label{sec2}

First, we establish classical exponential bounds for the generalized circular functions.

\begin{theorem}\label{thm3.1}
If $ p \geq 2 $ and $ x \in (0, \pi_p/2), $ then we have 
\begin{align}\label{eqn3.1}
e^{\alpha x^p} < \frac{\sin_p x}{x} < e^{\beta x^p}
\end{align}
with the best possible constants $ \alpha = \frac{2^p \ln(2/\pi_p)}{\pi_p^p} $ and $ \beta = -\frac{1}{p(p+1)}. $
\end{theorem}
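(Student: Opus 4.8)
The plan is to take logarithms and rewrite \eqref{eqn3.1} in the equivalent form
$$ \alpha < \frac{\ln(\sin_p x / x)}{x^p} < \beta, \qquad x \in (0, \pi_p/2). $$
Accordingly I would study the single function $ H(x) = \frac{\ln \sin_p x - \ln x}{x^p} $ and aim to prove that it is strictly decreasing on $ (0, \pi_p/2) $ with $ \lim_{x \to 0^+} H(x) = \beta $ and $ \lim_{x \to (\pi_p/2)^-} H(x) = \alpha $. The desired two-sided bound, together with the best-possibility of the constants, is then immediate, since a strictly decreasing function on an open interval takes values strictly between its one-sided limits, and these limits are not attained.

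For the monotonicity I would invoke the l'H\^{o}pital rule of monotonicity (Lemma \ref{lemm2.1}) with $ f_1(x) = \ln \sin_p x - \ln x $ and $ f_2(x) = x^p $. Both extend continuously to $ 0 $ with $ f_1(0) = f_2(0) = 0 $ (because $ \sin_p x / x \to 1 $), so $ H $ coincides with the quotient $ A(x) $ of the lemma taken at $ a = 0 $, and it suffices to show that
$$ \frac{f_1'(x)}{f_2'(x)} = \frac{\frac{1}{\tan_p x} - \frac{1}{x}}{p\,x^{p-1}} = \frac{\xi(x) - 1}{p\,x^p} $$
is strictly decreasing, where $ \xi(x) = x/\tan_p x $ is the function of Lemma \ref{lemm2.2}; here $ \frac{d}{dx}\sin_p x = \cos_p x $ and $ \cos_p x/\sin_p x = 1/\tan_p x $ are used.

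The main obstacle is precisely this last reduction. Lemma \ref{lemm2.2} only tells us that $ \xi $ is decreasing (and that $ \xi < 1 $, so the numerator is negative), which by itself does not control the quotient $ (\xi-1)/(p x^p) $, a ratio of two quantities that both vanish at $ 0 $ and are both monotone in the same direction. I would handle it by a second application of Lemma \ref{lemm2.1}, writing the quotient with numerator $ \xi(x) - 1 $ and denominator $ p x^p $ and reducing to the monotonicity of $ \xi'(x)/(p^2 x^{p-1}) $; equivalently, I would show directly that $ x\,\xi'(x) - p\,(\xi(x)-1) < 0 $ on $ (0,\pi_p/2) $, using $ \frac{d}{dx}\tan_p x = 1 + \tan_p^p x $ to expand $ \xi' $ and $ \xi'' $. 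Since the leading terms of $ x\xi' $ and $ p(\xi-1) $ cancel near $ 0 $, this is a delicate sign estimate, and I expect it to be exactly where the hypothesis $ p \ge 2 $ (stronger than the $ p>1 $ of the lemmas) is genuinely needed.

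Finally I would compute the two limits. Inverting the expansion $ \arcsin_p y = y + \frac{y^{p+1}}{p(p+1)} + \cdots $ gives $ \sin_p x = x - \frac{x^{p+1}}{p(p+1)} + \cdots $, whence $ \ln(\sin_p x/x) = -\frac{x^p}{p(p+1)} + \cdots $ and $ \lim_{x\to 0^+} H(x) = -\frac{1}{p(p+1)} = \beta $; while $ \sin_p(\pi_p/2) = 1 $ yields $ \lim_{x\to(\pi_p/2)^-} H(x) = \frac{\ln(2/\pi_p)}{(\pi_p/2)^p} = \frac{2^p \ln(2/\pi_p)}{\pi_p^p} = \alpha $. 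Exponentiating the strict inequalities $ \alpha < H(x) < \beta $ recovers \eqref{eqn3.1}, and the fact that each bound is approached only in the limit shows that $ \alpha $ and $ \beta $ are best possible.
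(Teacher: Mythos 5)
Your setup --- studying $H(x)=\ln(\sin_p x/x)/x^p$, proving it strictly decreasing via the monotone l'H\^{o}pital rule, and computing the one-sided limits $H(0+)=-\tfrac{1}{p(p+1)}$ and $H((\pi_p/2)-)=\tfrac{2^p\ln(2/\pi_p)}{\pi_p^p}$ --- is exactly the paper's, and your limit computations are correct. But there is a genuine gap at the decisive step: you reduce everything to showing that $\frac{f_1'(x)}{f_2'(x)}=\frac{\xi(x)-1}{p\,x^p}$ is strictly decreasing, and then you do not prove this; you only announce that you would ``show directly that $x\xi'(x)-p(\xi(x)-1)<0$'' and call it a delicate sign estimate. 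That inequality \emph{is} the theorem. Written out it reads $p\tan_p^2 x-(p-1)x\tan_p x-x^2\sec_p^p x<0$, and if you expand using $\tan_p x=x+\frac{x^{p+1}}{p+1}+\cdots$ and $\sec_p^p x=1+x^p+\cdots$, both the $x^2$ and the $x^{p+2}$ coefficients cancel identically, so the left side vanishes to order $x^{2p+2}$: no soft argument, and certainly not Lemma \ref{lemm2.2} alone, determines its sign. Your alternative reduction --- a second application of Lemma \ref{lemm2.1} to the pair $(\xi-1,\,p x^p)$, which requires $\xi'(x)/(p^2x^{p-1})$ to be decreasing --- is no easier: since $\xi'$ again involves $\tan_p$ and $\sec_p^p$ with a vanishing numerator, you are forced into a third application of the lemma, whose derivative quotient works out to $-\bigl(p\,[(p-1)\xi^{p-3}\cos_p^p x+2\,\xi^{p-2}]\bigr)^{-1}$; for $2\le p<3$ the factor $\xi^{p-3}$ is \emph{increasing}, so Lemma \ref{lemm2.2} does not apply termwise and extra input (e.g.\ the monotonicity of $\sin_p x/x$) is needed. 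None of this is carried out in your proposal.

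The missing idea, which is the paper's one real trick, is not to divide out $\sin_p x$: write
\begin{align*}
\frac{f_1'(x)}{f_2'(x)}=\frac{x\cos_p x-\sin_p x}{p\,x^p\sin_p x}=:\frac{f_3(x)}{f_4(x)},\qquad f_3(0)=f_4(0)=0,
\end{align*}
and apply Lemma \ref{lemm2.1} to this pair. The gain is that $f_3'(x)=-x\cos_p^{2-p}x\,\sin_p^{p-1}x$ (the $\cos_p x$ terms cancel upon differentiation, leaving a single product with no differences), so that
\begin{align*}
\frac{f_3'(x)}{f_4'(x)}=-\frac{1}{p}\cdot\frac{1}{\left(\frac{x}{\tan_p x}\right)^{p-1}+p\left(\frac{x}{\tan_p x}\right)^{p-2}},
\end{align*}
where $p\ge 2$ enters transparently: both exponents $p-1\ge 1$ and $p-2\ge 0$ are nonnegative, so by Lemma \ref{lemm2.2} the denominator is positive and strictly decreasing, hence $f_3'/f_4'$ is strictly decreasing, and two applications of Lemma \ref{lemm2.1} give that $H$ is strictly decreasing. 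With this replacement your argument, limits included, becomes a complete proof; as written, its core inequality is asserted rather than proved.
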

\begin{proof}
Consider the function
$$ f(x) = \frac{\ln\left(\frac{\sin_p x}{x}\right)}{x^p} := \frac{f_1(x)}{f_2(x)}, $$
where $ f_1(x) = \ln\left(\frac{\sin_p x}{x}\right) $ and $ f_2(x) = x^p $ with $ f_1(0+) = 0 $ and $ f_2(0) = 0. $ Differentiation gives
$$ \frac{f_1^{\prime}(x)}{f_2^{\prime}(x)} = \frac{x \cos_p x - \sin_p x}{p \cdot x^p \sin_p x} := \frac{f_3(x)}{f_4(x)}, $$
where $ f_3(x) = x \cos_p x - \sin_p x $ and $ f_4(x) = p \cdot x^p \sin_p x $ with $ f_3(0) = 0 = f_4(0).$ Again differentiating with respect to $ x, $ we get
\begin{align*}
\frac{f_3^{\prime} (x)}{f_4^{\prime} (x)} &= \frac{-1}{p} \cdot \frac{x \cos_p^{2-p} x \sin_p^{p-1} x}{x^p \cos_p x + p \cdot x^{p-1} \sin_p x} \\
&= -\frac{1}{p} \cdot \frac{1}{\left(\frac{x}{\tan_p x}\right)^{p-1} + p \left(\frac{x}{\tan_p x}\right)^{p-2}},
\end{align*}
which is decreasing by Lemma \ref{lemm2.2} as $p \geq2.$ Applying Lemma \ref{lemm2.1} repeatedly we see that $ f(x) $ is strictly decreasing in $ (0, \pi_p/2). $ Consequently, we write 
$$ f\left(\frac{\pi_p}{2}-\right) < f(x) < f(0+). $$
Since $ f(0+) = -\frac{1}{p(p+1)} $ and $ f(\pi_p/2-) = \frac{2^p \ln(2/\pi_p)}{\pi_p^p}, $ we obtain (\ref{eqn3.1}).
\end{proof}

\begin{theorem}\label{thm3.2}
Let $ p \geq 1 $ and $ a \in (0, a), $ where $ a < \frac{\pi_p}{2}.$ Then the inequalities
\begin{align}\label{eqn3.2}
e^{\alpha_1 x^p} < \cos_p x < e^{\beta_1 x^p}
\end{align}
hold with the best possible constants $ \alpha_1 = \frac{\ln(\cos_p a)}{a^p} $ and $ \beta_1 = -\frac{1}{p}. $
\end{theorem}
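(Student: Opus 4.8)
The plan is to follow exactly the template of Theorem \ref{thm3.1}, except that here a single application of Lemma \ref{lemm2.1} will suffice because after one differentiation the relevant quotient already reduces to the function $\xi$ of Lemma \ref{lemm2.2}. (I also read the hypothesis ``$a \in (0,a)$'' as the intended ``$x \in (0,a)$''.) First I would introduce
$$ g(x) = \frac{\ln(\cos_p x)}{x^p} =: \frac{g_1(x)}{g_2(x)}, $$
with $g_1(x) = \ln(\cos_p x)$ and $g_2(x) = x^p$. Because $\cos_p 0 = 1$, we have $g_1(0+) = 0 = g_2(0)$, so the $0/0$ form is present and Lemma \ref{lemm2.1} applies on $[0,a]$.

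The next step is to differentiate. Using $\frac{d}{dx}\cos_p x = -\cos_p^{2-p}x\,\sin_p^{p-1}x$, I get $g_1^{\prime}(x) = -\tan_p^{p-1}x$ and $g_2^{\prime}(x) = p\,x^{p-1}$, so that
$$ \frac{g_1^{\prime}(x)}{g_2^{\prime}(x)} = -\frac{1}{p}\left(\frac{\tan_p x}{x}\right)^{p-1} = -\frac{1}{p}\,\frac{1}{\xi(x)^{\,p-1}}, $$
where $\xi(x) = x/\tan_p x$. For $p > 1$, Lemma \ref{lemm2.2} gives that $\xi$ is positive and strictly decreasing on $(0,\pi_p/2)$; hence $\xi^{\,p-1}$ is strictly decreasing, $1/\xi^{\,p-1}$ is strictly increasing, and the whole ratio is strictly decreasing there. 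By Lemma \ref{lemm2.1}(ii), $g$ is strictly decreasing on $(0,\pi_p/2)$, and in particular on $(0,a)$.

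It then remains to read off the endpoint values. Since $\tan_p x \sim x$ as $x \to 0+$ we have $\xi(0+) = 1$, so the ordinary l'H\^opital limit gives $g(0+) = \lim_{x\to 0+} g_1^{\prime}(x)/g_2^{\prime}(x) = -\tfrac1p = \beta_1$, while $g(a) = \ln(\cos_p a)/a^p = \alpha_1$ directly. Strict monotonicity therefore yields $\alpha_1 = g(a) < g(x) < g(0+) = \beta_1$ for all $x \in (0,a)$, and multiplying by $x^p > 0$ and exponentiating produces (\ref{eqn3.2}). Sharpness is immediate from this same monotonicity: as $x\to a-$ the quotient $g(x)$ decreases to $\alpha_1$ and as $x\to 0+$ it increases to $\beta_1$, so neither constant can be replaced by a better one valid on the whole interval.

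I do not expect a genuine obstacle, only two points deserving care. The first is the justification of $g(0+) = -1/p$, which rests on $\xi(0+)=1$; this is where the limiting behaviour of $\sin_p,\cos_p$ near $0$ enters. The second is the boundary case $p = 1$, where $\xi^{\,p-1} \equiv 1$ makes the ratio the constant $-1$, so $g \equiv -1$ and the bounds degenerate to the identity $\cos_1 x = e^{-x}$; strict inequalities (and the applicability of Lemma \ref{lemm2.2}) genuinely require $p > 1$, so I would either state the strict result for $p > 1$ and treat $p=1$ as the explicit equality case, or phrase the inequalities as non-strict at $p = 1$.
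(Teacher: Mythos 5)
Your proof is correct and takes essentially the same route as the paper's: the same quotient $g(x) = \ln(\cos_p x)/x^p$, the same reduction of $g_1^{\prime}/g_2^{\prime}$ to $-\frac{1}{p}\left(\frac{\tan_p x}{x}\right)^{p-1}$, a single application of Lemma \ref{lemm2.2} followed by Lemma \ref{lemm2.1}, and the same endpoint limits $g(0+) = -\frac{1}{p}$ and $g(a-) = \frac{\ln(\cos_p a)}{a^p}$. Your closing remark on the case $p=1$ (where $\cos_1 x = e^{-x}$ makes the bounds collapse to equality, so strictness and Lemma \ref{lemm2.2} genuinely require $p>1$) identifies a point the paper passes over silently and is a worthwhile addition.
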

\begin{proof}
Suppose $$ g(x) = \frac{\ln(\cos_p x)}{x^p} := \frac{g_1(x)}{g_2(x)}, \, \, x \in (0, a), $$
where $ g_1(x) = \ln(\cos_p x) $ and $ g_2(x) = x^p $ such that $ g_1(0) = 0 = g_2(0).$
Then 
\begin{align*}
\frac{g_1^{\prime}(x)}{g_2^{\prime}(x)} &= \frac{-\cos_p^{1-p} x \sin_p^{p-1} x}{p \cdot x^{p-1}} = -\frac{1}{p} \left(\frac{\tan_p x}{x} \right)^{p-1}
\end{align*}
which is strictly decreasing in $ (0, a) $ by Lemma \ref{lemm2.2}. By Lemma \ref{lemm2.1}, $ g(x) $ is also decreasing in $(0, a). $ As a consequence we have $$ g(a-) < g(x) < g(0+). $$
The limits $ g(0+) = -\frac{1}{p} $ and $ g(a-) = \frac{\ln(\cos_p a)}{a^p} $ give the desired inequalities (\ref{eqn3.2}).
\end{proof}

\begin{theorem}\label{thm3.3}
Let $ p \geq 2 $ and $ x \in (0, b)$ where $ b < \frac{\pi_p}{2}. $ Then the best possible constants $ \alpha_2 $ and $ \beta_2 $ such that the inequalities
\begin{align}\label{eqn3.3}
e^{\alpha_2 x^p} < \frac{x}{\tan_p x} < e^{\beta_2 x^p}
\end{align}
hold true are $ \frac{\ln(b/\tan_p b)}{b^p} $ and $ -\frac{1}{p+1} $ respectively.
\end{theorem}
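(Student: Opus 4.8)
The plan is to mimic the proofs of Theorems~\ref{thm3.1} and~\ref{thm3.2} by applying the monotone form of l'H\^{o}pital's rule (Lemma~\ref{lemm2.1}) twice to the quotient
$$ h(x) = \frac{\ln\left(\frac{x}{\tan_p x}\right)}{x^p} =: \frac{h_1(x)}{h_2(x)}, $$
where $ h_1(x) = \ln(x/\tan_p x) $ and $ h_2(x) = x^p $, both of which vanish as $ x \to 0+ $ (since $ x/\tan_p x \to 1 $). Taking logarithms and dividing the target inequalities~(\ref{eqn3.3}) by $ x^p $ reduces everything to showing that $ h $ is strictly decreasing on $ (0, b) $, so that $ h(b-) < h(x) < h(0+) $; the asserted best constants are then precisely these two endpoint limits.

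First I would differentiate once. Using $ \diff{\tan_p x}{x} = 1 + \tan_p^p x $, a short computation yields
$$ \frac{h_1'(x)}{h_2'(x)} = \frac{\tan_p x - x - x\tan_p^p x}{p\,x^p \tan_p x} =: \frac{h_3(x)}{h_4(x)}, $$
and one checks $ h_3(0) = h_4(0) = 0 $. Here $ h_3 $ is exactly the auxiliary function $ \gamma $ from the proof of Lemma~\ref{lemm2.2}, whose derivative is $ \gamma'(x) = -p\,x\tan_p^{p-1}x\,(1+\tan_p^p x) $. Differentiating a second time and dividing numerator and denominator by $ \tan_p^{p-1} x\,(1+\tan_p^p x) $, the quotient collapses to
$$ \frac{h_3'(x)}{h_4'(x)} = \frac{-1}{\dfrac{p\,\xi(x)^{p-2}}{1+\tan_p^p x} + \xi(x)^{p-1}}, \qquad \xi(x) = \frac{x}{\tan_p x}. $$

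The decisive step is to read off the monotonicity of the denominator $ E(x) := \dfrac{p\,\xi^{p-2}}{1+\tan_p^p x} + \xi^{p-1} $. By Lemma~\ref{lemm2.2}, $ \xi $ is positive and strictly decreasing, so $ \xi^{p-1} $ is strictly decreasing and, since $ p \ge 2 $, the factor $ \xi^{p-2} $ is non-increasing; meanwhile $ \tan_p $ is increasing, so $ 1/(1+\tan_p^p x) $ is strictly decreasing. Hence the first summand, being a product of non-negative decreasing factors, is decreasing, and $ E $ is a sum of decreasing functions, therefore strictly decreasing and positive. Consequently $ -1/E = h_3'/h_4' $ is strictly decreasing. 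One application of Lemma~\ref{lemm2.1} then gives that $ h_3/h_4 = h_1'/h_2' $ is decreasing, and a second application gives that $ h $ is strictly decreasing on $ (0, b) $. Finally I would compute the endpoint limits: from the expansion $ \tan_p x = x + \frac{x^{p+1}}{p+1} + \cdots $ one gets $ h(0+) = -\frac{1}{p+1} $, the best $ \beta_2 $, while directly $ h(b-) = \frac{\ln(b/\tan_p b)}{b^p} $, the best $ \alpha_2 $; strict monotonicity shows these are approached but never attained inside $ (0,b) $, which secures both~(\ref{eqn3.3}) and optimality.

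The main obstacle is the second differentiation. Before the convenient division, the denominator of $ h_3'/h_4' $ can be written as $ p\,\xi^{p-2} + \xi^{p-1} + x^{p-1}\tan_p x $, whose last term \emph{increases}, so a naive term-by-term argument fails to establish monotonicity. The whole proof hinges on dividing through by $ \tan_p^{p-1}x\,(1+\tan_p^p x) $, which is exactly what converts every summand into a product of decreasing factors; it is precisely at this point that the hypothesis $ p \ge 2 $ is needed, to guarantee that $ \xi^{p-2} $ is non-increasing.
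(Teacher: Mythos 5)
Your proof is correct and takes essentially the same route as the paper: the same quotient $h_1/h_2$, two applications of Lemma \ref{lemm2.1}, and the same key step of rewriting $h_3'/h_4'$ as $-1$ over $\left(\frac{x}{\tan_p x}\right)^{p-1} + p\left(\frac{x}{\tan_p x}\right)^{p-2}\cos_p^p x$ (your factor $\frac{1}{1+\tan_p^p x}$ is exactly $\cos_p^p x$), with monotonicity supplied by Lemma \ref{lemm2.2} and the decrease of $\cos_p$. The only cosmetic differences are that you compute $h(0+)=-\frac{1}{p+1}$ from the expansion of $\tan_p$ rather than by l'H\^{o}pital's rule, and your bookkeeping of the constant $p$ is actually more careful than the paper's (which drops a factor of $p$ in $h_3'$ without consequence).
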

\begin{proof}
Let $$ h(x) = \frac{\ln\left(\frac{x}{\tan_p x}\right)}{x^p} := \frac{h_1(x)}{h_2(x)}, \, \, x \in (0, b) $$
where $ h_1(x) = \ln\left(\frac{x}{\tan_p x}\right) $ and $ h_2(x) = x^p $ satisfying $ h_1(0+) = 0 = h_2(0). $ After differentiating we get
$$ \frac{h_1^{\prime}(x)}{h_2^{\prime}(x)} = \frac{1}{p} \cdot \frac{\tan_p x - x \sec_p^p x}{x^p \tan_p x} := \frac{h_3(x)}{h_4(x)}. $$
Here $ h_3(x) = \tan_p x - x \sec_p^p x $ and $ h_4(x) = x^p \tan_p x $ with $ h_3(0) = 0 = h_4(0).$ Differentiation and elementary computations yield
\begin{align*}
\frac{h_3^{\prime}(x)}{h_4^{\prime}(x)} &= \frac{-x \sec_p^p x \tan_p^{p-1} x}{x^p \sec_p^p x + p x^{p-1} \tan_p x} \\
&= \frac{-1}{\left(\frac{x}{\tan_p x}\right)^{p-1}+p\left(\frac{x}{\tan_p x}\right)^{p-2} \cos_p^p x}.
\end{align*}
Now $ \frac{x}{\tan_p x} $ is strictly decreasing in $ (0, b)$ by Lemma \ref{lemm2.2} and it is positive as well in $ (0, b).$ Moreover, $ \cos_p x $ is also positive decreasing in $ (0, b). $ So we conclude that $ h(x) $  is strictly decreasing in $ (0, b) $ by Lemma \ref{lemm2.1}. The inequalities (\ref{eqn3.3}) follow due to the limits $ h(0+) = \frac{-1}{p+1} $ and $h(b-) = \frac{\ln(b/\tan_p b)}{b^p}. $ This finishes the proof of Theorem \ref{thm3.3}.
\end{proof}

The particular cases of which the inequalities in the above theorems are generalizations are given below.
\begin{align}\label{eqn3.4}
e^{\frac{4 \ln(2/\pi)}{\pi^2} x^2} < \frac{\sin x}{x} < e^{- x^2/6}, \, \, x \in (0, \pi/2)
\end{align}
\begin{align}\label{eqn3.5}
e^{\frac{\ln(\cos a)}{a^2} x^2} < \cos x < e^{-x^2/2}, \, \, x \in (0, a), \, \, a < \frac{\pi}{2}
\end{align}
and
\begin{align}\label{eqn3.6}
e^{\frac{\ln(b/\tan b)}{b^2} x^2} < \frac{x}{\tan x} < e^{ -x^2/3}, \, \, x \in (0, b), \, \, b < \frac{\pi}{2}.
\end{align}
The inequalities (\ref{eqn3.4}), (\ref{eqn3.5}), and (\ref{eqn3.6}) have been obtained by putting $ p = 2 $ in Theorems \ref{thm3.1}, \ref{thm3.2}, and \ref{thm3.3}. These inequalities have already appeared in \cite{bagul, bagul1, chesneau}.

Next, we establish classical exponential bounds for the generalized hyperbolic functions.

\begin{theorem}\label{thm3.4}
For $ x \in (0, c), $ where $ c > 0 $ and $ p \geq 2, $ we have 
\begin{align}\label{eqn3.7}
e^{\lambda x^p} < \frac{x}{\sinh_p x} < e^{\eta x^p}
\end{align}
with the best possible constants $ \lambda = -\frac{1}{p(p+1)} $ and $ \eta = \frac{\ln(c/\sinh_p c)}{c^p}.$
\end{theorem}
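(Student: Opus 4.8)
The plan is to mirror the monotonicity argument used for Theorem \ref{thm3.1}, but with the generalized hyperbolic analogue of Lemma \ref{lemm2.2}, namely Lemma \ref{lemm2.3}, doing the work. First I would set
$$ f(x) = \frac{\ln\left(\frac{x}{\sinh_p x}\right)}{x^p} := \frac{f_1(x)}{f_2(x)}, $$
with $ f_1(x) = \ln(x/\sinh_p x) $ and $ f_2(x) = x^p, $ noting $ f_1(0+) = 0 = f_2(0), $ so that the l'H\^{o}pital monotonicity rule of Lemma \ref{lemm2.1} is applicable.

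Next I would differentiate once, using $ \diff{\sinh_p x}{x} = \cosh_p x, $ to obtain
$$ \frac{f_1^{\prime}(x)}{f_2^{\prime}(x)} = \frac{\sinh_p x - x \cosh_p x}{p \cdot x^p \sinh_p x} := \frac{f_3(x)}{f_4(x)}, $$
again with $ f_3(0) = 0 = f_4(0). $ Differentiating a second time, using $ \diff{\cosh_p x}{x} = \cosh_p^{2-p} x \sinh_p^{p-1} x $ and simplifying exactly as in Theorem \ref{thm3.1}, I expect to reach the clean form
$$ \frac{f_3^{\prime}(x)}{f_4^{\prime}(x)} = -\frac{1}{p} \cdot \frac{1}{\left(\frac{x}{\tanh_p x}\right)^{p-1} + p\left(\frac{x}{\tanh_p x}\right)^{p-2}}. $$

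The decisive step is the sign of the monotonicity. By Lemma \ref{lemm2.3}, $ x/\tanh_p x $ is strictly \emph{increasing} and positive, and since $ p \geq 2 $ forces the exponents $ p-1 $ and $ p-2 $ to be nonnegative, the denominator above is positive and increasing; hence $ f_3^{\prime}/f_4^{\prime} $ is increasing. Applying Lemma \ref{lemm2.1} twice then shows that $ f $ is strictly increasing on $ (0, c), $ so that $ f(0+) < f(x) < f(c-). $ This is precisely where the hyperbolic case genuinely diverges from the circular one: because the monotonicity of $ x/\tanh_p x $ is opposite to that of $ x/\tan_p x, $ the resulting chain of inequalities is reversed and the lower and upper constants swap their roles relative to Theorem \ref{thm3.1}.

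Finally I would evaluate the two limits. The boundary value $ f(c-) = \ln(c/\sinh_p c)/c^p = \eta $ is immediate by continuity, while $ f(0+) $ requires the expansion $ \sinh_p x = x + \frac{x^{p+1}}{p(p+1)} + o(x^{p+1}), $ which gives $ \ln(x/\sinh_p x) \sim -x^p/(p(p+1)) $ and hence $ f(0+) = -\frac{1}{p(p+1)} = \lambda. $ The main obstacle I anticipate is the bookkeeping in the second differentiation and the algebraic reduction to the displayed $ x/\tanh_p x $ form, together with keeping the direction of every inequality straight once the monotonicity flips; the limit $ f(0+) $ is the only place where a short series computation is genuinely needed.
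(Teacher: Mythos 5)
Your proposal is correct and follows essentially the same route as the paper: the same quotient $\ln(x/\sinh_p x)/x^p$, the same double application of Lemma \ref{lemm2.1} after reducing the second derivative ratio to $-1/p$ times the reciprocal of $\left(\frac{x}{\tanh_p x}\right)^{p-1}+p\left(\frac{x}{\tanh_p x}\right)^{p-2}$, and the same use of Lemma \ref{lemm2.3} to flip the monotonicity relative to Theorem \ref{thm3.1}. The only cosmetic difference is at $f(0+)$, where you invoke the series expansion of $\sinh_p$ while the paper cites l'H\^{o}pital's rule (equivalently, one can just evaluate the already-computed ratio $f_3^{\prime}/f_4^{\prime}$ at the limit $x/\tanh_p x \to 1$); both are fine.
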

\begin{proof}
Let us set $$ F(x) = \frac{\ln\left(\frac{x}{\sinh_p x}\right)}{x^p} := \frac{F_1(x)}{F_2(x)}, $$
where $ F_1(x) = \ln\left(\frac{x}{\sinh_p x}\right) $ and $ F_2(x) = x^p $ with $ F_1(0+) = 0 = F_2(0). $ Differentiation with respect to $ x $ yields
$$ \frac{F_1^{\prime}(x)}{F_2^{\prime}(x)} = \frac{1}{p} \cdot \frac{\sinh_p x - x \cosh_p x}{x^p \sinh_p x} := \frac{1}{p} \cdot \frac{F_3(x)}{F_4(x)} $$
Here $ F_3(0) = 0 = F_4(0) $ which forces us to differentiate again and get
\begin{align*}
\frac{F_3^{\prime}(x)}{F_4^{\prime}(x)} &= \frac{-x \cosh_p^{2-p} x \sinh_p^{p-1} x}{x^p \cosh_p x + p \cdot x^{p-1} \sinh_p x} \\
&= \frac{-1}{\left(\frac{x}{\tanh_p x}\right)^{p-1} + p \left( \frac{x}{\tanh_p x}\right)^{p-2}}.
\end{align*}
Utilization of Lemma \ref{lemm2.3} implies that the function $ \frac{F_3^{\prime}(x)}{F_4^{\prime}(x)} $ is strictly increasing in $ (0, c). $ Then the repeated use of Lemma \ref{lemm2.1} shows that the function $ F(x) $ is also strictly increasing in $ (0, c).$ Lastly, $ F(0+) = -\frac{1}{p(p+1)} $ by l'H\^{o}pital's rule and $ F(c-) = \frac{\ln(c/\sinh_p c)}{c^p}.$
\end{proof}

\begin{theorem}\label{thm3.5}
The best possible constants $ \lambda_1 $ and $ \eta_1$ such that the double inequality
\begin{align}\label{eqn3.8}
e^{\lambda_1 x^p} <  \cosh_p x < e^{\eta_1 x^p}
\end{align}
holds for $ p \geq 1 $ and $ x \in (0, d), $ where $ d > 0, $ are $ \frac{\ln(\cosh_p d)}{d^p} $ and $ \frac{1}{p} $ respectively.
\end{theorem}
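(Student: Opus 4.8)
The plan is to mirror the proof of Theorem \ref{thm3.2}, with the circular function $\cos_p$ replaced by its hyperbolic analogue $\cosh_p$ and Lemma \ref{lemm2.2} replaced by Lemma \ref{lemm2.3}. First I would introduce
$$ G(x) = \frac{\ln(\cosh_p x)}{x^p} := \frac{G_1(x)}{G_2(x)}, \quad x \in (0, d), $$
where $ G_1(x) = \ln(\cosh_p x) $ and $ G_2(x) = x^p $, both vanishing at the origin (since $ \cosh_p 0 = 1 $), so that the l'H\^{o}pital rule of monotonicity (Lemma \ref{lemm2.1}) becomes applicable.

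Next, using $ \diff{\cosh_p x}{x} = \cosh_p^{2-p} x \sinh_p^{p-1} x $, I would compute
\begin{align*}
\frac{G_1^{\prime}(x)}{G_2^{\prime}(x)} = \frac{\cosh_p^{1-p} x \, \sinh_p^{p-1} x}{p \cdot x^{p-1}} = \frac{1}{p} \left( \frac{\tanh_p x}{x} \right)^{p-1}.
\end{align*}
By Lemma \ref{lemm2.3} the quantity $ x/\tanh_p x $ is strictly increasing on $ (0, d) $, hence $ \tanh_p x / x $ is positive and strictly decreasing there; raising it to the nonnegative power $ p-1 $ preserves this monotonicity for $ p \geq 1 $. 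Thus $ G_1^{\prime}/G_2^{\prime} $ is decreasing, and Lemma \ref{lemm2.1} yields that $ G(x) $ is decreasing on $ (0, d) $. Consequently $ G(d-) < G(x) < G(0+) $, which after exponentiation is exactly (\ref{eqn3.8}) with $ \lambda_1 = G(d-) $ and $ \eta_1 = G(0+) $.

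The last step is to evaluate the two limits, and this is the only place where genuine work is needed. The value $ G(d-) = \ln(\cosh_p d)/d^p $ follows by continuity. For $ G(0+) $, I would observe that since $ \tanh_p x / x \to 1 $ as $ x \to 0+ $, the ratio $ G_1^{\prime}/G_2^{\prime} \to 1/p $; by l'H\^{o}pital's rule this forces $ G(0+) = 1/p $, giving $ \eta_1 = 1/p $. Strict monotonicity of $ G $ together with continuity up to the endpoints makes both constants best possible, since any larger $ \lambda_1 $ would fail near $ x = d $ and any smaller $ \eta_1 $ would fail near $ x = 0 $.

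I expect the genuine subtlety to be the borderline case $ p = 1 $: there $ \tanh_p x / x $ is raised to the zeroth power, so $ G_1^{\prime}/G_2^{\prime} \equiv 1 $, the function $ G $ is constant, and the two bounds collapse (indeed $ \cosh_1 x = e^{x} $). Hence the \emph{strict} double inequality really requires $ p > 1 $, and the monotonicity argument above is strict only in that range; I would flag this restriction rather than claim strictness at $ p = 1 $.
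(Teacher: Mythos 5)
Your proposal is correct and follows essentially the same route as the paper: the same quotient $G(x) = \ln(\cosh_p x)/x^p$, the same computation $G_1^{\prime}/G_2^{\prime} = \frac{1}{p}\left(\frac{\tanh_p x}{x}\right)^{p-1}$, monotonicity via Lemma \ref{lemm2.3} fed into Lemma \ref{lemm2.1}, and the same endpoint limits. Your closing remark about $p=1$ is in fact a refinement the paper misses: its proof asserts the ratio is strictly decreasing ``as $p \geq 1$,'' but at $p=1$ the ratio is identically $1$, $G$ is constant, $\cosh_1 x = e^x$, and both bounds collapse to $e^x$, so the strict double inequality genuinely requires $p > 1$.
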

\begin{proof}
Consider the function
$$ G(x) = \frac{\ln(\cosh_p x)}{x^p} := \frac{G_1(x)}{G_2(x)}, $$
where $ G_1(x) = \ln(\cosh_p x) $ and $ G_2(x) = x^p $ with $ G_1(0) = 0 = G_2(0).$ Then by elementary computations we get
\begin{align*}
\frac{G_1^{\prime}(x)}{G_2^{\prime}(x)} = \frac{\cosh_p^{2-p} x \sinh_p^{p-1} x}{p \cdot x^{p-1} \cosh_p x} = \frac{1}{p} \cdot \left( \frac{\tanh_p x}{x}\right)^{p-1}
\end{align*}
which is strictly decreasing in $ (0, 1) $ by Lemma \ref{lemm2.3} as $ p \geq 1.$ Applying Lemma \ref{lemm2.1}, we conclude that $ G(x) $ is strictly decreasing in $ (0, d). $ Thus $$ G(d-) < G(x) < G(0+). $$
Theorem \ref{thm3.5} follows from the limits $ G(0+) = \frac{1}{p} $ and $ G(d-) = \frac{\ln(\cosh_p d)}{d^p}. $
\end{proof}
\begin{theorem}\label{thm3.6}
If $ p \geq 2 $ and $ x \in (0, m), $ where $ m > 0, $ then the best possible constants $ \lambda_2 $ and $ \eta_2 $ such that the inequalities
\begin{align}\label{eqn3.9}
e^{\lambda_2 x^p} < \frac{\tanh_p x}{x}  < e^{\eta_2 x^p}
\end{align}
hold true are $ -\frac{1}{p+1} $ and $ \frac{\ln((\tanh_p m)/m)}{m^p} $ respectively.
\end{theorem}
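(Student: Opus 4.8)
The plan is to mirror the strategy of Theorems \ref{thm3.3} and \ref{thm3.4}, combining the l'H\^{o}pital monotone rule (Lemma \ref{lemm2.1}) with Lemma \ref{lemm2.3}. First I would set
$$ H(x) = \frac{\ln\left(\frac{\tanh_p x}{x}\right)}{x^p} := \frac{H_1(x)}{H_2(x)}, \quad x \in (0, m), $$
with $ H_1(x) = \ln\left(\frac{\tanh_p x}{x}\right) $ and $ H_2(x) = x^p, $ noting $ H_1(0+) = 0 = H_2(0) $ since $ \tanh_p x / x \to 1. $ Differentiating and using $ \diff{\tanh_p x}{x} = \sech_p^p x $ should give
$$ \frac{H_1^{\prime}(x)}{H_2^{\prime}(x)} = \frac{1}{p} \cdot \frac{x \sech_p^p x - \tanh_p x}{x^p \tanh_p x} := \frac{1}{p} \cdot \frac{H_3(x)}{H_4(x)}, $$
where again $ H_3(0) = 0 = H_4(0), $ which forces a second differentiation.

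Using $ \diff{\sech_p x}{x} = -\sech_p x \tanh_p^{p-1} x, $ I expect the cancellation $ H_3^{\prime}(x) = -p x \sech_p^p x \tanh_p^{p-1} x $ and $ H_4^{\prime}(x) = p x^{p-1} \tanh_p x + x^p \sech_p^p x. $ Dividing numerator and denominator by $ x \sech_p^p x \tanh_p^{p-1} x $ and invoking $ 1/\sech_p^p x = \cosh_p^p x, $ I anticipate arriving at
$$ \frac{H_3^{\prime}(x)}{H_4^{\prime}(x)} = \frac{-p}{\left(\frac{x}{\tanh_p x}\right)^{p-1} + p \left(\frac{x}{\tanh_p x}\right)^{p-2} \cosh_p^p x}, $$
which is the hyperbolic counterpart of the expression appearing in Theorem \ref{thm3.3}.

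The step I expect to be the crux is the monotonicity of this last ratio. By Lemma \ref{lemm2.3} the quantity $ x/\tanh_p x $ is positive and strictly increasing on $ (0, m), $ and $ \cosh_p x $ is likewise positive and increasing. The hypothesis $ p \geq 2 $ is precisely what makes the exponent $ p - 2 $ nonnegative, so that $ \left(\frac{x}{\tanh_p x}\right)^{p-2} $ is increasing; together with $ \cosh_p^p x $ increasing, this forces the whole denominator to be increasing, whence $ H_3^{\prime}/H_4^{\prime} $ is strictly increasing. (For $ 1 < p < 2 $ the factor $ \left(\frac{x}{\tanh_p x}\right)^{p-2} $ would be decreasing and could compete with the increasing $ \cosh_p^p x, $ which explains why the restriction $ p \geq 2 $ is imposed.) Two successive applications of Lemma \ref{lemm2.1} then yield, in turn, that $ H_3/H_4 $ and finally $ H $ itself are strictly increasing on $ (0, m). $

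It remains to read off the endpoint values. Since $ x/\tanh_p x \to 1 $ and $ \cosh_p x \to 1 $ as $ x \to 0+, $ the displayed ratio tends to $ -p/(p+1), $ so by l'H\^{o}pital's rule $ H(0+) = \frac{1}{p} \cdot \frac{-p}{p+1} = -\frac{1}{p+1}, $ while $ H(m-) = \frac{\ln((\tanh_p m)/m)}{m^p} $ directly. The strict monotonicity gives $ H(0+) < H(x) < H(m-), $ and exponentiating the identity $ \ln((\tanh_p x)/x) = x^p H(x) $ produces (\ref{eqn3.9}) with $ \lambda_2 = -\frac{1}{p+1} $ and $ \eta_2 = \frac{\ln((\tanh_p m)/m)}{m^p}. $ Continuity and strictness of $ H $ then certify that these constants are best possible.
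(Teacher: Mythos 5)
Your proposal is correct and follows essentially the same path as the paper's own proof: the same quotient $H = H_1/H_2$, the same second-stage ratio $H_3/H_4$, the same closed form for $H_3'/H_4'$, and the same appeal to Lemmas \ref{lemm2.1} and \ref{lemm2.3} with the endpoint limits giving the best constants. If anything, your write-up is slightly more careful than the paper's, since you make explicit where the hypothesis $p \geq 2$ enters (the exponent $p-2 \geq 0$) and you keep the factor $\frac{1}{p}$ consistently, which the paper's notation glosses over.
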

\begin{proof}
Let us introduce the function
$$ H(x) = \frac{\ln\left(\frac{\tanh_p x}{x}\right)}{x^p} := \frac{H_1(x)}{H_2(x)}, $$
where $ H_1(x) = \ln\left(\frac{\tanh_p x}{x}\right) $ and $ H_2(x) = x^p $ with $ H_1(0) = 0 = H_2(0). $ By differentiation we get
$$ \frac{H_1^{\prime}(x)}{H_2^{\prime}(x)} = \frac{1}{p} \cdot \frac{x \sech_p^p x - \tanh_p x}{x^p \tanh_p x} := \frac{H_3(x)}{H_4(x)}, $$
where $ H_3(x) = x \sech_p^p x - \tanh_p x $ and $ H_4(x) = x^p \tanh_p x $ satisfying $ H_3(0) = 0 = H_4(0).$ Then
\begin{align*}
\frac{H_3^{\prime}(x)}{H_4^{\prime}(x)} &= \frac{-p \cdot x \sech_p^p x \tanh_p^{p-1} x}{p \cdot x^{p-1} \tanh_p x + x^p \sech_p^p x} \\
&= \frac{-p}{p \cdot \cosh_p^p x \left(\frac{x}{\tanh_p x}\right)^{p-2} + \left( \frac{x}{\tanh_p x}\right)^{p-1}}.
\end{align*}
Now $ \frac{x}{\tanh_p x} $ is clearly positive in $ (0, m) $ and it is also increasing by Lemma \ref{lemm2.3}. This implies that $ \frac{H_3^{\prime}(x)}{H_4^{\prime}(x)} $ is strictly increasing, fortiori $ H(x) $ is strictly increasing in $ (0, m) $ by Lemma \ref{lemm2.1}. Therefore 
$$ H(0+) = -\frac{1}{p+1} < H(x) < \frac{\ln((\tanh_p m)/m)}{m^p} = H(m-). $$ This completes the proof of Theorem \ref{thm3.6}.
\end{proof}
An immediate consequence of Theorem \ref{thm3.3} and Theorem \ref{eqn3.6} is the following corollary.
\begin{corollary}\label{cor3.7}
Let $ p \geq 2 $ and $ b < \frac{\pi_p}{2}. $  Then it is true that
\begin{align}\label{eqn3.10}
\frac{x}{\tan_p x} < \frac{\tanh_p x}{x}, \, \, x \in (0, b).
\end{align}
\end{corollary}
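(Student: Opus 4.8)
The plan is to exploit the fact that the best-possible upper constant in Theorem \ref{thm3.3} and the best-possible lower constant in Theorem \ref{thm3.6} are \emph{identical}, both equal to $-\frac{1}{p+1}$. This makes $e^{-x^p/(p+1)}$ a common intermediate quantity that separates the two ratios, so the corollary should follow by a single transitivity step rather than by any fresh analysis.

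First I would fix $p \geq 2$ and $b < \pi_p/2$, and observe that Theorem \ref{thm3.6} is available on $(0,b)$: since that theorem holds on every interval $(0,m)$ with $m > 0$, I simply take $m = b$. Thus both Theorem \ref{thm3.3} and Theorem \ref{thm3.6} apply on the same domain $(0,b)$ simultaneously, which is what we need for the inequality (\ref{eqn3.10}) stated on $(0,b)$.

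Next, the right-hand inequality of (\ref{eqn3.3}), with the best-possible $\beta_2 = -\frac{1}{p+1}$, gives
$$ \frac{x}{\tan_p x} < e^{-x^p/(p+1)}, \quad x \in (0,b), $$
while the left-hand inequality of (\ref{eqn3.9}), with the best-possible $\lambda_2 = -\frac{1}{p+1}$, gives
$$ e^{-x^p/(p+1)} < \frac{\tanh_p x}{x}, \quad x \in (0,b). $$
Chaining these two estimates through the common exponential $e^{-x^p/(p+1)}$ immediately yields (\ref{eqn3.10}).

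There is essentially no obstacle here; the only points requiring a moment's care are the matching of the domains — confirming that the hyperbolic estimate of Theorem \ref{thm3.6} may be taken on the same interval $(0,b)$ as the circular estimate — and the observation that the two extremal constants coincide. Once these are noted, the corollary is an immediate consequence of the transitivity of strict inequalities.
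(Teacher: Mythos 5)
Your proposal is correct and is exactly the paper's argument: the paper presents this corollary as an immediate consequence of Theorems \ref{thm3.3} and \ref{thm3.6}, i.e., chaining $\frac{x}{\tan_p x} < e^{-x^p/(p+1)} < \frac{\tanh_p x}{x}$ on $(0,b)$ after taking $m=b$. Your explicit attention to matching the domains and to the coincidence of the extremal constants $\beta_2 = \lambda_2 = -\frac{1}{p+1}$ fills in precisely the details the paper leaves implicit.
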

Indeed, the inequality (\ref{eqn3.10}) is valid in $ (0, \pi_p/2). $

In the end, we remark that by putting $ p = 2 $ in Theorems \ref{thm3.4}, \ref{thm3.5}, and \ref{thm3.6}, we achieve the following existing inequalities.
\begin{align}\label{eqn3.11}
e^{- x^2/6} < \frac{x}{\sinh x} < e^{\frac{\ln(c/\sinh c)}{c^2} x^2}, \, \, x \in (0, c), \, \, c > 0,
\end{align}
\begin{align}\label{eqn3.12}
e^{\frac{\ln(\cosh d)}{d^2} x^2} <  \cosh x < e^{x^2/2}, \, \, x \in (0, d), \, \, d > 0,
\end{align}
and
\begin{align}\label{eqn3.13}
e^{-x^2/3} < \frac{\tanh x}{x}  < e^{\frac{\ln((\tanh m)/m)}{m^2} x^2}, \, \, x \in (0, m), \, \, m > 0.
\end{align}

The above inequalities (\ref{eqn3.11}), (\ref{eqn3.12}), and (\ref{eqn3.13}) were established and discussed in \cite{bagul, bagul2, bagul3, chesneau}.

\end{document}